\documentclass[12pt]{amsart}

\usepackage{fullpage}
\usepackage{setspace}
%
%
%
%

\usepackage[leqno]{amsmath}
\usepackage{amsfonts}
\usepackage{amssymb}
\usepackage{amsthm}
\usepackage{amssymb}
\usepackage{longtable}
\usepackage{bbm}



\theoremstyle{plain}
\newtheorem{thm}{Theorem}[section]

\newtheorem{lem}[thm]{Lemma}

\theoremstyle{definition}

\pagenumbering{arabic}


\newcommand{\R}{\mathbb{R}}
\newcommand{\C}{\mathbb{C}}
\newcommand{\ga}{\gamma}

\newcommand{\be}{\beta}

\newcommand{\si}{\sigma}

\newcommand{\al}{\alpha}

\newcommand{\Xa}{X_\alpha}
\newcommand{\xa}{x_\alpha}
\newcommand{\xb}{x_\beta}
\newcommand{\xc}{x_\gamma}

\newcommand{\Ra}{R^\alpha}
\newcommand{\Rb}{R^\beta}
\newcommand{\La}{L^\alpha}
\newcommand{\Lb}{L^\beta}
\DeclareMathOperator{\nr}{nilrad}
\DeclareMathOperator{\Ann}{Ann}
\newcommand{\ns}{N\!S}
\newcommand{\eL}{\mathfrak{L}}

\title{Solvable Leibniz Algebras with Abelian Nilradicals}


\author[Bosko-Dunbar]{Lindsey Bosko-Dunbar}
\address{Department of Mathematics, Spring Hill College\\
Mobile, AL 36608}
\email{lboskodunbar@shc.edu}

\author[Burke]{Matthew Burke}
\address{Department of Mathematics, Spring Hill College\\
Mobile, AL 36608}
\email{mjburke@email.shc.edu}

\author[Dunbar]{Jonathan D. Dunbar}
\address{Department of Mathematics, Spring Hill College\\
Mobile, AL 36608}
\email{jdunbar@shc.edu}

\author[Hird]{J.T. Hird}
\address{Department of Mathematics, West Virginia University, Institute of Technology\\
Montgomery, WV 25136}
\email{John.Hird@mail.wvu.edu}

\author[Stagg Rovira]{Kristen Stagg Rovira}
\address{Department of Mathematics, The University of Texas at Tyler\\
Tyler, TX 75799}
\email{kstagg@uttyler.edu}




\begin{document}
\doublespacing

\begin{abstract}
We extend the classification of solvable Lie algebras with abelian nilradicals to classify solvable Leibniz algebras which are one dimensional extensions of abelian nilradicals.
\end{abstract}

\maketitle

\section{Introduction}\label{intro}

    Leibniz algebras were defined by Loday in 1993 \cite{loday, loday2}.  Recently, there has been a trend to show how various results from Lie algebras extend to Leibniz algebras \cite{ao, ayupov, omirov}.  In particular, there has been interest in extending classifications of certain classes of Lie algebras to classifications of corresponding Leibniz algebras \cite{aor, chelsie-allison, bdhs, clok, clok2, kho}.

Some of the classifications of Lie algebras come from placing certain restrictions on the nilradical \cite{cs, nw, tw, wld}.  Several authors have been able to extend these results to Leibniz algebras or show similar results for Leibniz algebras with certain restrictions on the nilradical \cite{bdhs, clok, clok2, kko, kho}.  In praticular, Ndogmo and Winternitz \cite{nw} study solvable Lie algebras with abelian nilradicals.  The goal of this paper is to utilize the results of \cite{nw} to develop similar results in the Leibniz setting. 

We construct a general classification theorem for solvable Leibniz algebras with abelian nilradicals over $\C$. Furthermore, we discuss the case of 1-dimensional extensions; then provide an explicit classification of all 1-dimensional extensions of 1-, 2-, and 3-dimensional abelian nilradicals.  In \cite{ck}, Ca\~nete and Khudoyberdiyev classify all non-nilpotent 4-dimensional Leibniz algebras over $\C$. Our classification recovers their result for 4-dimensional algebras with 3-dimensional abelian nilradicals.  We also develop some results on 2-dimensional extensions of abelian nilradicals, specifically we show that all 4-dimensional solvable Leibniz algebras with  2-dimensional abelian nilradicals are in fact Lie algebras.

\section{Preliminaries}

A Leibniz algebra, $\eL$, is a vector space over a field (which we will take to be $\C$ or $\R$) with a bilinear operation (which we will call multiplication) defined by $[x,y]$ which satisfies the Jacobi identity
\begin{equation}\label{Jacobi}
[x,[y,z]] = [[x,y],z] + [y,[x,z]]
\end{equation}
for all $x,y,z \in \eL$.  In other words $L_x$, left-multiplication by $x$, is a derivation.  Some authors choose to impose this property on $R_x$, right-multiplication by $x$, instead.  Such an algebra is called a ``right'' Leibniz algebra, but we will consider only ``left'' Leibniz algebras (which satisfy \eqref{Jacobi}).  $\eL$ is a Lie algebra if additionally $[x,y]=-[y,x]$.

The derived series of a Leibniz (Lie) algebra $\eL$ is defined by $\eL^{(1)}=[\eL,\eL]$, $\eL^{(n+1)}=[\eL^{(n)},\eL^{(n)}]$ for $n\ge 1$.  $\eL$ is called solvable if $\eL^{(n)}=0$ for some $n$. The lower-central series of $\eL$ is defined by $\eL^2 = [\eL,\eL]$, $\eL^{n+1}=[\eL,\eL^n]$ for $n>1$. $\eL$ is called nilpotent if $\eL^n=0$ for some $n$.  It should be noted that if $\eL$ is nilpotent, then $\eL$ must be solvable.

The nilradical of $\eL$ is defined to be the (unique) maximal nilpotent ideal of $\eL$, denoted by $\nr(\eL)$.  It is a classical result that if $\eL$ is solvable, then $\eL^2 = [\eL,\eL] \subseteq \nr(\eL)$.  From \cite{mubar}, we have that
\begin{equation*}\label{dimension}
\dim (\nr(\eL)) \geq \frac{1}{2} \dim (\eL).
\end{equation*}
An abelian Lie algebra $A(r)$ is the $r$-dimensional Lie algebra with basis \newline $\{n_1, n_2, \ldots, n_r\}$, and multiplication
\begin{equation}\label{Abel}
[n_i,n_j]=0,\ \forall\ i,j=1,2,\ldots,r.
\end{equation}

The left-annihilator or left-normalizer of a Leibniz algebra $\eL$ is the ideal $\Ann_\ell(\eL) = \left\{x\in \eL\mid [x,y]=0\ \forall y\in \eL\right\}$. Note that the elements $[x,x]$ and $[x,y] + [y,x]$ are in $\Ann_\ell(\eL)$, for all $x,y\in \eL$, because of \eqref{Jacobi}.

An element $x$ in a Leibniz algebra $\eL$ is nilpotent if both $(L_x)^n = (R_x)^n = 0$ for some $n$.  In other words, for all $y$ in $\eL$
\begin{equation*}
[x,\cdots[x,[x,y]]] = 0 = [[[y,x],x]\cdots,x].
\end{equation*}

A set of matrices $\{\Xa\}$ is called linearly nilindependent if no non-zero linear combination of them is nilpotent.  In other words, if
\begin{equation*}
X = \displaystyle\sum_{\al=1}^f c_\al \Xa,
\end{equation*}
then $X^n=0$ implies that $c_\al=0$ for all $\al$.  A set of elements of a Leibniz algebra $\eL$ is called linearly nilindependent if no non-zero linear combination of them is a nilpotent element of $\eL$.

\section{Classification}

Let $A(r)$ be the $r$-dimensional abelian (Lie) algebra over the field $F$ ($\C$ or $\R$) with basis $\{n_1, \ldots, n_r\}$ and products given by \eqref{Abel}.  By appending the basis of $A(r)$ with $s$ linearly nilindependent elements $\{x_1, \ldots, x_s\}$, we construct an $n$-dimensional solvable Leibniz algebra, $L(r,s)$ where $n=r+s$.  In doing so, we create a Leibniz algebra whose nilradical is $A(r)$.  Henceforth, we shall only consider such $L(r,s)$ that are indecomposable. As in \cite{nw}, we have the following constraints on $r$, $s$, and $n$:
\begin{equation}\label{bounds}
n=r+s,\quad \dfrac{n}{2}\le r \le n-1.
\end{equation}

Let $\eL$ be a finite-dimensional solvable non-nilpotent Leibniz algebra over the field $F$, of characteristic zero, subject to \eqref{bounds}.  Let $\eL$ have an $r$-dimensional abelian nilradical, namely $\nr(L)=A(r)$. For $\eL$, we may choose the basis $\{n_1, \ldots, n_r, x_1,\ldots,x_s\}$, where $n_1, \ldots, n_r\in A(r)$ and $x_1, \ldots, x_s\in \eL\backslash A(r)$. In general, the bracket relations for elements in $A(r)$ are defined by $[n_i,n_j]=0$, $\forall\ n_i,n_j\in A(r)$. Letting $N = (n_1,\ n_2,\ \ldots,\ n_r)^T$, then the left and right bracket relations of elements $x_\al$ 
are defined, for $1\le\al\le s\le\dfrac n2$, by
\begin{align}
	\tag{4a}\label{xnproducts}
    \begin{split}
    [x_\al,N]&= \La N,\quad \La\in F^{r\times r},\\ 
    [N,x_\al]&= \Ra N,\quad \Ra\in F^{r\times r},
    \end{split}\\
    \tag{4b}\label{xproducts}
    [x_\al,x_\be] &= \si^{\al\be}_j n_j,\ \si^{\al\be}_j\in F,
\end{align}
    \addtocounter{equation}{1}
since $\eL^2\subseteq\nr(\eL)=A(r)$. Note that we are employing Einstein notation in \eqref{xproducts}, summing over all $j$.

The classification of these Leibniz algebras $L(r,s)$ is equivalent to the classification of the matrices $\La$ and $\Ra$ and the constants $\si^{\al \be}_{j}$.  The Jacobi identities for the triples $\{x_\al, x_\be, n_i\}$, $\{x_\al, n_{i}, x_\be\}$, $\{n_{i}, x_\al, x_\be\}$ for $1 \leq \al, \be \leq s$ and $1 \leq i \leq r$ yield, respectively,
\begin{align*}
(\La_{ij}\Lb_{jk} - \Lb_{ij}\La_{jk}) n_k =&	0\\
(\Rb_{ij}\La_{jk} - \La_{ij}\Rb_{jk}) n_k =&	0\\
(\Ra_{ij}\Rb_{jk} + \Rb_{ij}\La_{jk}) n_k =&	0.
\end{align*}
Unlike the Lie case, these give nontrivial relations for $s=1$ or $\al=\be$ when $s>1$.  By the linear nilindependence of the $n_k$, we have the following relations on the matrices $\La$ and $\Ra$
\begin{align}
\tag{5a}\label{2.7}[\La,\Lb]	=&	0\\
\tag{5b}\label{2.7twist}[\La,\Rb] =&	0\\
\tag{5c}\label{un2.7}(\Ra + \La)\Rb =&	0,
\end{align}
\addtocounter{equation}{1}
where \eqref{un2.7} utilizes \eqref{2.7twist}.

The Jacobi identity for the triple $\{x_\al, x_\be, x_\ga\}$ for $1 \leq \al, \be, \ga \leq s$ gives us
\begin{equation}\label{2.16}
\si^{\be \ga}_{j} \La_{jk} - \si^{\al \be}_{j} R^\ga_{jk} - \si^{\al \ga}_{j} \Lb_{jk} = 0,
\end{equation}
summed over $j$.  Again, we do not require $\al, \be, \ga$ to be distinct, which in particular gives nontrivial relations for $s \geq 1$.  Equivalently, since $[x_\al,x_\al]\in \Ann_\ell(L)$, we have $\si^{\al\al}_{j}\Rb_{jk}=0$ $\forall\ \al,\be=1,\ldots,s$ and, again, we are summing over index $j$.


\begin{lem}\label{nullspace}
Let $L$ be a Leibniz algebra with abelian nilradical, with elements defined by \eqref{xnproducts} and \eqref{xproducts}, and denote $\si^{\al\be}=\begin{pmatrix}	\si^{\al\be}_1	\\	\vdots	\\	\si^{\al\be}_r	\end{pmatrix}$. Then, $\si^{\al\al},\si^{\al\be}+\si^{\be\al}\in\ns((R^\gamma)^T)$, for all $\gamma=1,\ldots,s$.
\end{lem}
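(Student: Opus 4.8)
The plan is to extract the two claimed membership statements directly from the identities already derived in the text, namely from the relation $\si^{\al\al}_{j}\Rb_{jk}=0$ and from equation \eqref{2.16}. First I observe that the condition $\si^{\al\al}\in\ns((R^\gamma)^T)$ is, by definition of the null space and transpose, exactly the statement $\sum_j \si^{\al\al}_j (R^\gamma)_{jk}=0$ for all $k$, i.e. $(R^\gamma)^T \si^{\al\al}=0$. But this is precisely the relation $\si^{\al\al}_j R^\gamma_{jk}=0$ (summed over $j$) that the excerpt records immediately after \eqref{2.16}, obtained from the fact that $[x_\al,x_\al]\in\Ann_\ell(L)$ together with \eqref{xnproducts}. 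So the first half requires nothing more than unwinding notation.

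For the second half, concerning $\si^{\al\be}+\si^{\be\al}$, I would symmetrize equation \eqref{2.16} appropriately. Writing \eqref{2.16} for the ordered triple $(\al,\be,\ga)$ gives $\si^{\be\ga}_j\La_{jk} - \si^{\al\be}_j R^\ga_{jk} - \si^{\al\ga}_j\Lb_{jk}=0$. The term I want to isolate, $\si^{\al\be}_j R^\ga_{jk}$, appears with the two "$\La$-type" terms; the strategy is to choose a second instance of \eqref{2.16} whose sum or difference with the first cancels those $\La$-terms and leaves a multiple of $(\si^{\al\be}+\si^{\be\al})_j R^\ga_{jk}$. Swapping $\al\leftrightarrow\be$ in \eqref{2.16} yields $\si^{\al\ga}_j\La_{jk} - \si^{\be\al}_j R^\ga_{jk} - \si^{\be\ga}_j\Lb_{jk}=0$; adding this to the original equation, the $\si^{\be\ga}_j\La_{jk}$ and $-\si^{\be\ga}_j\Lb_{jk}$ terms combine, as do the $\si^{\al\ga}$ terms, and since $[\La,\Lb]=0$ by \eqref{2.7} — or, more simply, since these are just sums of scalars and each $\La$-term reappears with opposite sign — the $\La$-contributions cancel outright, leaving $-(\si^{\al\be}_j+\si^{\be\al}_j)R^\ga_{jk}=0$. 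That is exactly $(R^\ga)^T(\si^{\al\be}+\si^{\be\al})=0$, i.e. $\si^{\al\be}+\si^{\be\al}\in\ns((R^\ga)^T)$.

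A cleaner route to the second half, which avoids even touching \eqref{2.16}, is to note that $[x_\al,x_\be]+[x_\be,x_\al]\in\Ann_\ell(L)$ for all $\al,\be$, by the remark in the Preliminaries that $[x,y]+[y,x]\in\Ann_\ell(\eL)$; in coordinates $[x_\al,x_\be]+[x_\be,x_\al]=(\si^{\al\be}_j+\si^{\be\al}_j)n_j$, and applying left-multiplication, bracketing on the right by $N$ and using \eqref{xnproducts} forces $(\si^{\al\be}_j+\si^{\be\al}_j)R^\ga_{jk}=0$ for every $\ga$, since $[n_j, x_\ga]=R^\ga_{jk}n_k$ and the $n_k$ are linearly (nil)independent. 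Similarly $[x_\al,x_\al]\in\Ann_\ell(L)$ gives the first half in one line. I expect no real obstacle here: the only point requiring care is the bookkeeping of which bracket ($[\,\cdot\,,N]$ versus $[N,\,\cdot\,]$) produces $R^\ga$ rather than $L^\ga$, so I would state explicitly that an element $w=w_jn_j$ lies in $\Ann_\ell(L)$ iff $[w,x_\ga]=0$ for all $\ga$ (the products $[w,n_i]$ and $[n_i,w]$ vanish automatically since $\nr(L)$ is abelian), and then $[w,x_\ga]=w_j[n_j,x_\ga]=w_jR^\ga_{jk}n_k=0$ gives $(R^\ga)^Tw=0$.
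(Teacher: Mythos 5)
Your ``cleaner route'' is exactly the paper's proof: it uses $[x_\al,x_\be]+[x_\be,x_\al]\in\Ann_\ell(L)$, brackets on the right by $x_\ga$, and reads off $(R^\ga)^T(\si^{\al\be}+\si^{\be\al})=0$, with the $\be=\al$ case giving the first claim. One caveat on your alternative route via \eqref{2.16}: swapping $\al\leftrightarrow\be$ should produce $\si^{\al\ga}_j\Lb_{jk}-\si^{\be\al}_jR^\ga_{jk}-\si^{\be\ga}_j\La_{jk}=0$ (you interchanged $\La$ and $\Lb$), and only with the corrected version do the $L$-terms cancel pairwise as you describe.
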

\begin{proof}
    Recall that, for Leibniz algebra $L$, $[x,y]+[y,x]\in \Ann_\ell(L)$, $\forall x,y\in L$. Hence, $[\xa,\xb]+[\xb,\xa]\in \Ann_\ell(L)$,\ $\forall \al,\be=1,\ldots,s$. In particular, $[[\xa,\xb]+[\xb,\xa],\xc]=0$, which implies that
    $$(\si^{\al\be}_i+\si^{\be\al}_i)R^\gamma_{ij}=0,$$
    where here we are summing over all $i=1,\ldots,r$. Additionally, this is true for all $j=1,\ldots,r$ and all $\gamma=1,\ldots,s$.  Thus, $(\si^{\al\be}+\si^{\be\al})^T R^\gamma=0$, which implies that $(R^\gamma)^T(\si^{\al\be}+\si^{\be\al})=0$.  Therefore, $\si^{\al\be}+\si^{\be\al}\in\ns((R^\gamma)^T)$.  What is more, if $\be=\al$, then it immediately follows that $\si^{\al\al}\in\ns((R^\gamma)^T)$, as well.
\end{proof}
It is advisable to note that Lemma \ref{nullspace} does not imply that $\si^{\al\be}+\si^{\be\al}$ or $\si^{\al\al}$ need be nontrivial elements of $\ns((R^\gamma)^T$.

In an effort to simplify matrices $\La$ and $\Ra$ and constants $\si^{\al\be}_j$, we employ several transformations which leave bracket relations \eqref{Abel} invariant. Namely,
	\begin{itemize}
    \item	redefine the elements of $\nr(L)$:
    	\begin{equation}\label{Stransform}
		\begin{split}
		\hspace{17pt}N \longrightarrow SN, \quad S \in GL(r,F) \\
		\Rightarrow
		\begin{cases}
		\Ra \longrightarrow S\Ra S^{-1}\\
		\La \longrightarrow S\La S^{-1},
		\end{cases}
		\end{split}
		\end{equation}
    
    \item	redefine the elements of the extension:
    	\begin{equation}\label{xtransform}
        	x_\al \longrightarrow x_\al + \mu^\al_j n_j,\quad \mu^\al_j\in F,
        \end{equation}
    
    \item	redefine the extension by linear combinations of $x_\al$:
    	\begin{equation}\label{Gtransform}
		\hspace{17pt}X \longrightarrow GX, \quad X \in GL(s,F),
		\end{equation}
where $X=(x_1,x_2,\ldots,x_s)^T$.    
    \end{itemize}

The matrix $S$ used in \eqref{Stransform} is simply required to be invertible to preserve \eqref{Abel}.  So, we may choose $S$ to be the appropriate permutation matrix which transforms $R^1$ into Jordan canonical form.  Therefore, the classification of Leibniz algebras $L(r,s)$ with abelian nilradical will rely primarily on classes defined by the Jordan canonical form of the $r\times r$ matrix $R^1$.  Unfortunately, it cannot be guaranteed that the same $S$ will also transform $L^1$ or, $\Ra$ or $\La$, $\al>1$, into Jordan canonical form.  These matrices, however, will be determined by other constraints.  For example, note that if $R^1$ has
  no zero eigenvalues, then $(R^1)^{-1}$ exists, and so by \eqref{un2.7}, $R^\al=-L^\al$ for all $\al=1,\ldots,s$.

Observe that, in \eqref{Gtransform}, for the special case of a 1-dimensional extension of $A(r)$, the matrix $G$ will be a scalar. 

Note that \eqref{xtransform} leaves $\ns((\Ra)^T)$ and the matrices $\La$ and $\Ra$ invariant, but it will have an effect on $\si^{\al\be}_k$.  That is,
	\begin{equation}\label{sigtransform}
    	\si^{\al\be}_k\longrightarrow \si^{\al\be}_k + \mu^{\al}_j\Rb_{jk} + \mu^\be_j\La_{jk},\quad \forall\ k=1,\ldots,r.
    \end{equation}
In the special case where $\al=\be$, we see that \eqref{sigtransform} appears as $\si^{\al\al}_k\longrightarrow \si^{\al\al}_k + \mu^{\al}_j(\Ra_{jk} + \La_{jk})$, implying that we cannot use \eqref{xtransform} to manipulate $\si^{\al\al}$ when $\Ra=-\La$, for $\al=1,\ldots,s$.  This is of particular value in the $s=1$ case, as it implies that we may not change $\si^{11}_k = \si_k$ when $R^1=-L^1$.
        
\section{1-Dimensional Extensions of $A(r)$}    
We will explore the non-Lie Leibniz algebras whose abelian nilradical $A(r)$ is extended by a single semisimple element, $x$.  In this case we have a specialized version of the previous result Lemma \ref{nullspace}, as well as a general classification theorem for non-Lie Leibniz algebras $L(r,1)$.  Throughout this section, we will assume that $R$ is transformed by \eqref{Stransform} into Jordan canonical form, which places the eigenvalues of $R$ on its diagonal.  Note that at least one of these eigenvalues must be nonzero, or else $x$ would act nilpotently on $A(r)$ from the right.  Once this transformation is complete, we will reorder the basis elements of $A(r)$ such that any Jordan blocks associated with zero eigenvalues are moved to the bottom right corner of $R$.  We do this reordering in such a way so as to preserve each Jordan block.  Lastly, we perform transformation \eqref{Gtransform}.  Since $s=1$, $G$ is any nonzero scalar of our choosing.  Let $G=\dfrac{1}{\lambda_1}$, where $\lambda_1$ is the first eigenvalue of $R$, associated with the nilradical basis element $n_1$.  This ensures that $R_{11}=1$.  Putting the result back in Jordan form preserves the 1's off the main diagonal.

\begin{lem}\label{Lr1nullspace}
	Let $L(r,1)$ be a Leibniz algebra that is a 1-dimensional extension of an $r$-dimensional abelian nilradical, with elements defined by \eqref{xnproducts} and \eqref{xproducts}, and denote $\si= (\si_1,\ \ldots,\ \si_r)^T$. Then, 
	\begin{enumerate}
    	\item	$\si\in\ns(R^T)$, and 
        \item	
        		$R_{ij}\ne0$ implies that $\si_i=0$.
	\end{enumerate}
\end{lem}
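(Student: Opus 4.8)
The plan is to obtain (1) as the $s=1$ specialization of Lemma~\ref{nullspace}, and then to derive (2) from (1) together with the standing assumption of this section that $R$ has been put into Jordan canonical form.

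For part (1): when $s=1$ the only product of extension elements is $[x,x]=\si_j n_j$, and since $[x,x]\in\Ann_\ell(L)$ we have in particular $[[x,x],x]=0$. Using $[n_i,x]=R_{ij}n_j$ from \eqref{xnproducts}, this says $\si_i R_{ij} n_j=0$, and the linear independence of the $n_j$ forces $\si_i R_{ij}=0$ for every $j$, i.e.\ $R^T\si=0$. (Equivalently, this is Lemma~\ref{nullspace} with $\al=\be=\ga=1$, or \eqref{2.16} with all indices equal to $1$.)

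For part (2): decompose $R$ into its Jordan blocks $J=\mu I_m+N$, with $N$ the nilpotent shift, each occupying a set of consecutive indices $B=\{p,p+1,\dots,p+m-1\}$; since $R$ is block-diagonal it suffices to work inside one block. Let $v=(\si_p,\dots,\si_{p+m-1})^T$. The equations of $R^T\si=0$ indexed by the columns $j\in B$ are precisely $(\mu I_m+N^T)v=0$, i.e.\ $N^T v=-\mu v$. If $\mu\neq 0$, then $v$ would be an eigenvector of the nilpotent matrix $N^T$ for the nonzero eigenvalue $-\mu$, hence $v=0$; and since every row $i\in B$ of $R$ carries the nonzero diagonal entry $\mu$, (2) holds on this block. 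If $\mu=0$, then $v\in\ker N^T$, which is one-dimensional; writing $e_q$ (with $q\in B$) for a coordinate vector spanning it, we get $\si_i=0$ for every $i\in B\setminus\{q\}$. The index $q$ is exactly the one for which the $q$-th row of $R$ is identically zero, whereas every other row $i\in B$ has a nonzero off-diagonal entry coming from $N$; so once more $R_{ij}\neq 0$ forces $\si_i=0$. Letting the block range over all of $R$ yields (2).

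The substance of the proof is in (2), and the points that require attention are: (2) is strictly stronger than (1) for an arbitrary matrix $R$, so the Jordan form --- which makes the system $R^T\si=0$ triangular within each block --- is genuinely being used; and in the nilpotent-block case one must verify that the single coordinate left free by $\ker N^T$ is precisely the index whose row of $R$ vanishes, so that a nonzero $\si_i$ is never witnessed by some $R_{ij}\neq 0$. The preparatory normalizations described before the lemma (moving zero-eigenvalue blocks to the lower-right corner, rescaling so that $R_{11}=1$) leave $R$ block-Jordan, so none of this is affected.
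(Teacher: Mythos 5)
Your proof is correct and follows essentially the same route as the paper: part (1) is the $s=1$ case of Lemma~\ref{nullspace} (equivalently \eqref{2.16} with all indices equal), and part (2) solves $R^T\si=0$ block by block in Jordan form. The only cosmetic difference is that you finish each block with a kernel computation ($\ker(\mu I+N^T)$ is trivial for $\mu\ne0$ and spanned by the last coordinate vector for $\mu=0$), whereas the paper runs a column-by-column induction down the block; the content is the same.
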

\begin{proof}
Statement (1)  follows directly from Lemma \ref{nullspace}, in the case of a 1-dimensional extension.

For statement (2), recall that we may choose the appropriate permutation matrix $S$ which transforms $R$ into Jordan canonical form by conjugation.  Once this transformation is applied, $R$ consists of Jordan blocks with each nonzero column of $R$ either having a single nonzero entry or two nonzero entries. Suppose that column $j$ has a single nonzero entry in row $i$.  Then, by \eqref{2.16}, we know that $\si_i=0$.

Suppose now that column $j$ has two nonzero entries.  Then, these entries must be $R_{jj}$ and $R_{j-1,j}$, since they are components of $J_\lambda$, an $m\times m$ Jordan block associated with the eigenvalue $\lambda\ne0$.  (Hence $i=j$ or $j-1$.)  So, there is a smallest integer $p<m$ such that the $j-p$ column of $R$ has a single nonzero entry, namely $R_{(j-p)(j-p)}$, the first nonzero entry of $J_\lambda$, and hence $\si_{j-p}=0$. Now consider the $j-(p-1)$ column of $R$, which must have two nonzero entries $R_{(j-p)(j-(p-1))}$ and $R_{(j-(p-1))(j-(p-1))}$.  By \eqref{2.16}, it must be the case that $R_{(j-p)(j-(p-1))}\si_{j-p} + R_{(j-(p-1))(j-(p-1))}\si_{j-(p-1)}=0$. Since $\si_{j-p}=0$ and $R_{(j-(p-1))(j-(p-1))}=\lambda\ne0$, then $\si_{j-(p-1)}=0$.  An iteration of this process will yield that $\si_{j-1}=\si_j=0$, so $\si_i=0$ as needed.
\end{proof}

\begin{lem}\label{Lr1Lie}
Let $L(r,1)$ be a solvable Leibniz algebra that is a 1-dimensional extension by $x$of an $r$-dimensional abelian nilradical, $A(r)$.  If the matrix $R$, which defines the right-action of $x$ on $A(r)$, is nonsingular, then $L(r,1)$ is a Lie algebra.
\end{lem}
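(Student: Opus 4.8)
The plan is to deduce from the nonsingularity of $R$ that the bracket of $L(r,1)$ is anticommutative on a basis, whence $L(r,1)$ is a Lie algebra.

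First I would apply \eqref{un2.7}, which in the case $s=1$ is the relation $(R+L)R=0$. Because $R$ is nonsingular, right-cancellation of $R$ gives $L=-R$, so that $[x,N]=LN=-RN=-[N,x]$; in other words $[x,n_i]=-[n_i,x]$ for each $i=1,\dots,r$. Next I would show that the self-bracket vanishes: by part (1) of Lemma \ref{Lr1nullspace}, $\si\in\ns(R^T)$, and since $R^T$ is nonsingular its nullspace is trivial, so $\si=0$ and hence $[x,x]=\si_j n_j=0$. The only brackets left to consider among the basis $\{n_1,\dots,n_r,x\}$ are the products $[n_i,n_j]=0$, which are anticommutative as well.

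At this point $[u,v]=-[v,u]$ has been verified for every pair $u,v$ of basis vectors of $L(r,1)$, so bilinearity of the bracket extends the identity to all of $L(r,1)$. Since a Leibniz algebra whose bracket is anticommutative is precisely a Lie algebra, this finishes the proof. I do not expect a genuine obstacle here: the statement drops out directly from \eqref{un2.7} together with Lemma \ref{Lr1nullspace}. The one point needing attention is that anticommutativity must be established identically on $L(r,1)$ (by checking it on a basis and invoking bilinearity) rather than merely asserted.
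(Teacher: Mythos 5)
Your proof is correct and follows essentially the same route as the paper: derive $L=-R$ from \eqref{un2.7} using the invertibility of $R$, then conclude $\sigma=0$ via Lemma \ref{Lr1nullspace}. The only (harmless) difference is that you invoke part (1) of that lemma --- $\sigma\in\ns(R^T)$ together with the triviality of $\ns(R^T)$ --- whereas the paper passes to Jordan form and uses part (2) via the nonzero diagonal entries; your version is if anything slightly more direct, and your explicit verification of anticommutativity on the basis is a welcome bit of extra care.
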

\begin{proof}
	If $R$ is nonsingular, then $R^{-1}$ exists.  So, by \eqref{un2.7}, $R=-L$.  Furthermore, since we may use \eqref{Stransform} to transform $R$ into its Jordan canonical form, then the diagonal of $R$ will consist of its eigenvalues.  Nonsingular matrices have only nonzero eigenvalues. Thus, by Lemma \ref{Lr1nullspace}, $\si_i=0$, for all $i=1,\ldots,r$.  Therefore, $L(r,1)$ is a Lie algebra.
\end{proof}

\begin{thm}
Let $\eL$ be an $(r+1)$-dimensional solvable non-nilpotent Leibniz algebra over the field $F$, of characteristic zero with $r<\infty$. Let $\eL$ have the $r$-dimensional abelian nilradical, namely $\nr(L)=A(r)$, and let $\eL$ be subject to \eqref{bounds}. We may choose the basis of $L$ to be $\{n_1, \ldots, n_r, x\}$, where $n_1, \ldots, n_r\in A(r)$ and $x \in L\backslash A(r)$. The bracket relations for elements in $A(r)$ are defined by $[n_i,n_j]=0$, $\forall\ n_i,n_j\in A(r)$. Letting $N = (n_1, n_2,\ldots, n_r)^T$, then the left and right bracket relations of x 
are defined by
\begin{align}
	\tag{11a}\label{xnproduct}
    \begin{split}
    [x,N]&= LN,\quad L \in F^{r\times r},\\ 
    [N,x]&= RN,\quad R \in F^{r\times r},
    \end{split}\\
    \tag{11b}\label{xbracket}
    [x,x] &= \si_j n_j,\ \si_j\in F, \text{ such that }\si\in\ns(R^T),
\end{align}
\addtocounter{equation}{1}
where $\sigma=(\sigma_1,\ \ldots,\ \sigma_r)^T$
\end{thm}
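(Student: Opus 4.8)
The plan is simply to read off the asserted presentation from the hypotheses, using only the classical inclusion $\eL^2\subseteq\nr(\eL)$ and the annihilator identity for $[x,x]$ recorded in Section~2; no new computation is really needed. First I would observe that, because $\eL$ is non-nilpotent, $\nr(\eL)\subsetneq\eL$, so the ideal $A(r)=\nr(\eL)$ has codimension $(r+1)-r=1$ in $\eL$. Hence any choice of $x\in\eL\setminus A(r)$ completes $\{n_1,\dots,n_r\}$ to a basis $\{n_1,\dots,n_r,x\}$ of $\eL$. (Note that \eqref{bounds} with $s=1$ amounts to $r\ge 1$ here, so the situation is nonvacuous; indeed a one-dimensional Leibniz algebra is abelian, so $r\ge 1$ automatically.) The products $[n_i,n_j]=0$ are just the defining relations \eqref{Abel} of $A(r)$.

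Next I would pin down the form of the remaining brackets. Since $\eL$ is solvable, the classical fact $\eL^2=[\eL,\eL]\subseteq\nr(\eL)=A(r)$ gives that every product of basis vectors lies in $\spn\{n_1,\dots,n_r\}$. In particular $[x,n_i]\in A(r)$ for each $i$, so there is a matrix $L\in F^{r\times r}$ with $[x,N]=LN$; likewise $[N,x]=RN$ for some $R\in F^{r\times r}$; and $[x,x]\in A(r)$, so $[x,x]=\si_j n_j$ for scalars $\si_j$. This is precisely \eqref{xnproduct} together with the first assertion of \eqref{xbracket}.

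It remains to prove $\si\in\ns(R^T)$. Here I would invoke the observation of Section~2 that $[x,x]\in\Ann_\ell(\eL)$, which follows from \eqref{Jacobi}. Taking $y=x$ in the defining property of $\Ann_\ell$ yields $0=[[x,x],x]=\si_j[n_j,x]=\si_j R_{jk} n_k$, and linear independence of the $n_k$ forces $\sum_j\si_j R_{jk}=0$ for every $k$, i.e.\ $R^T\si=0$. This is exactly Lemma~\ref{nullspace} --- equivalently Lemma~\ref{Lr1nullspace}(1) --- specialized to $s=1$, $\al=\be=1$.

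There is no genuinely hard step: the statement is a consolidation, in the one-dimensional-extension case, of the structural facts already assembled in Section~2. The only points that need care are the codimension count guaranteeing that a single extension element suffices --- this is where non-nilpotency of $\eL$ is used --- and the discipline of asserting \emph{only} the canonical form \eqref{xnproduct}--\eqref{xbracket} with $\si\in\ns(R^T)$; any further normalization of $R$ (Jordan form, the rescaling $R_{11}=1$, nonsingularity forcing the Lie case, and so on) is handled separately, as in Lemma~\ref{Lr1Lie} and the discussion preceding it.
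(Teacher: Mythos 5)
Your proposal is correct and follows essentially the same route the paper takes: the paper offers no separate proof of this theorem, treating it as the $s=1$ specialization of the general setup \eqref{xnproducts}--\eqref{xproducts} (which rests on $\eL^2\subseteq\nr(\eL)$) together with Lemma \ref{nullspace}/Lemma \ref{Lr1nullspace}(1) for the condition $\si\in\ns(R^T)$, and your computation $0=[[x,x],x]=\si_jR_{jk}n_k$ is exactly the paper's argument for that lemma. Your added remark on the codimension count is a harmless (and correct) piece of bookkeeping that the paper leaves implicit.
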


We will specifically describe in detail $L(r,1)$ in the cases of $r = 1,2,3$.

\subsection{Leibniz algebras $L(1,1)$ of non-Lie type}

For $L(1,1) = \langle n,x\rangle$, the products are $[n,x]=Rn$, $[x,n]=Ln$, and $[x,x]=\si n$, with $R,L,\si\in F$. By Lemma \ref{Lr1Lie}, we see that if $R\ne 0$, then $L(1,1)$ is a Lie algebra. Suppose, then, that $R=0$, implying that $L\ne0$, else $x\in\nr(L(1,1))$. Thus, using \eqref{Gtransform} with $G=L^{-1}$, we obtain the algebra described in Table \ref{L11}.  Note that since $R=0$, there is no restriction placed on $\si$.  These algebras are classified in section 1 of Table \ref{L11}.

\subsection{Leibniz algebras $L(2,1)$ of non-Lie type}

For $r = 2$, the only two Jordan canonical forms are
\begin{align}
	\tag{12a}\label{JCF1}
    &\left(\begin{array}{cc}
    	\lambda_1 &0\\
        0&\lambda_2\\
    \end{array}\right)\\
    \tag{12b}\label{JCF2}
    &\left(\begin{array}{cc}
    	\lambda_1&1\\
        0&\lambda_1\\
    \end{array}\right)
\end{align}
\addtocounter{equation}{1}

     
where $\lambda_1$ and $\lambda_2$ are eigenvalues of the matrix, $R$, that we are transforming, where $\lambda_1$ and $\lambda_2$ are not necessarily distinct. By Lemma \ref{Lr1Lie}, $R$ must be singular.
This leaves us with three possibilities for $R$:
$$
\left(\begin{array}{cc}
    	0&0\\
        0&0\\
    \end{array}\right), \qquad
\left(\begin{array}{cc}
    	\lambda_1 &0\\
        0&0\\
    \end{array}\right), \qquad
\left(\begin{array}{cc}
    	0&1\\
        0&0\\
    \end{array}\right).
$$

\subsubsection{Case 1: $R=0$.}
Since $R=0$, \eqref{Stransform} leaves $R$ invariant (the zero matrix is always in Jordan form) and so can be used to put $L$ in Jordan form instead, so $L$ will be of form \eqref{JCF1} or \eqref{JCF2}.  If the diagonals of $L$ are zero ($\lambda_1=\lambda_2=0$), then $L$ is nilpotent, hence so is $\eL$, which contradicts $\nr(\eL)=A(2)$.

Using \eqref{Gtransform} we can transform $\lambda_1$ to 1 in either case, so $L$ is one of
$$
\left(\begin{array}{cc}
    	1&0\\
        0&\lambda\\
    \end{array}\right), \qquad
\left(\begin{array}{cc}
    	1 &1\\
        0&1\\
    \end{array}\right),
$$
where $\lambda=\lambda_2$ can be zero (note that if $\lambda_1=0$ and $\lambda_2 \neq 0$ then we must first switch the basis elements $n_1$ and $n_2$).  Now applying \eqref{xtransform} with the appropriate choice of $\mu$ makes $\sigma_1=\sigma_2=0$ except when $\lambda=0$, in which case $\sigma_1=0$ and $\sigma_2$ is free.  
These algebras are classified in section 2 of Table \ref{L21}.

 \subsubsection{Case 2:} $R=
\left(\begin{array}{cc}
     	\lambda_1 &0\\
         0&0\\
     \end{array}\right)$.
     
Using \eqref{Gtransform} with $G=\dfrac{1}{\lambda_1}$, we can further simplify this matrix to $R = 
\left(\begin{array}{cc}
    	1	&0\\
        0	&0
    \end{array}\right)$.
Applying relations \eqref{2.7twist} and \eqref{un2.7} we find that the matrix $L$ is of the form $L =
\left(\begin{array}{cc}
    	-1	&0\\
        0	&a
    \end{array}\right)$.

If $a=0$, then \eqref{sigtransform} implies that \eqref{xtransform} leaves $\si$ invariant, so $\si_1$ and $\si_2$ are free.  However, if $\si_1=\si_2=0$, then $\eL$ is Lie.  If $a \neq 0$, 
then an appropriate choice of $\mu$ makes $\si_2=0$, 
but $R_{11} = -L_{11}$, so \eqref{sigtransform} implies that \eqref{xtransform} leaves $\si_1$ invariant.  These algebras are classified in section 2 of Table \ref{L21}.

\subsubsection{Case 3:} $R=
\left(\begin{array}{cc}
    	0 &1\\
        0&0\\
    \end{array}\right)
$.\label{maxnilp}
Applying relations \eqref{2.7twist} and \eqref{un2.7} we find that the matrix $L$ is of the form $L =
\left(\begin{array}{cc}
    	0	&a\\
        0	&0
    \end{array}\right)$.
Thus $R$ and $L$ are both nilpotent, hence so is $x$, which contradicts $\nr(\eL)=A(2)$.  Therefore we find no permissible algebras in this case.




\subsection{Leibniz algebras $L(3,1)$ of non-Lie type}

Solvable Leibniz algebras $L(3,1)$ of non-Lie type are 1-dimensional extensions of a 3-dimensional abelian nilradical. Similar to the previous section, we begin classification by first considering the possible Jordan canonical forms for $3\times 3$ matrices. 
Again, we arrange the eigenvalues so that the non-zero eigenvalues, if there are any, come before zero eigenvalues. The possible Jordan canonical forms that matrix $R$ may take are as follows:
\begin{gather}
	\tag{14a}\label{JCF3}
    \begin{pmatrix}
    	\lambda_1	&	0			&	0	\\
        0			&	\lambda_2	&	0	\\
        0			&	0			&	\lambda_3
	\end{pmatrix}\phantom{,}\\
    \tag{14b}\label{JCF4}
    \begin{pmatrix}
    	\lambda_1	&	1			&	0	\\
        0			&	\lambda_1	&	0	\\
        0			&	0			&	\lambda_2
	\end{pmatrix}\phantom{,}
\qquad
    \begin{pmatrix}
    	\lambda_1	&	0			&	0	\\
        0			&	\lambda_2	&	1	\\
        0			&	0			&	\lambda_2
	\end{pmatrix}\phantom{,}\\
    \tag{14c}\label{JCF5}
    \begin{pmatrix}
    	\lambda_1	&	1			&	0	\\
        0			&	\lambda_1	&	1	\\
        0			&	0			&	\lambda_1
	\end{pmatrix},
\end{gather}
\addtocounter{equation}{1}
where $\lambda_1$, $\lambda_2$, and $\lambda_3$ are not necessarily distinct.

As before, by Lemma \ref{Lr1Lie}, $R$ must be singular. This means that we can force at least one $\lambda_i$ to be zero.  If $\lambda_1 \neq 0$, then once we perform transformation \eqref{Gtransform} with $G=\dfrac{1}{\lambda_1}$, the leading eigenvalue becomes 1.  Putting the result back in Jordan form, the following possibilities for $R$ remain:
\begin{gather}
	\label{L31nilp}
    \begin{pmatrix}
    	0\	&	0\	&	0\	\\
        0\	&	0\	&	0\	\\
        0\	&	0\	&	0\
	\end{pmatrix}
\qquad
    \begin{pmatrix}
    	0\	&	1\	&	0\	\\
        0\	&	0\	&	0\	\\
        0\	&	0\	&	0\
	\end{pmatrix}
\qquad
    \begin{pmatrix}
    	0\	&	1\	&	0\	\\
        0\	&	0\	&	1\	\\
        0\	&	0\	&	0\
	\end{pmatrix}
\\
    \begin{pmatrix}
    	1\	&	0\	&	0\	\\
        0\	&	0\	&	0\	\\
        0\	&	0\	&	0\
	\end{pmatrix}
\qquad
	\begin{pmatrix}
    	1\	&	0\	&	0\	\\
        0\	&	a\	&	0\	\\
        0\	&	0\	&	0\
	\end{pmatrix}
\qquad
    \begin{pmatrix}
    	1\	&	1\	&	0\	\\
        0\	&	1\	&	0\	\\
        0\	&	0\	&	0\
	\end{pmatrix}
\qquad
    \begin{pmatrix}
    	1\	&	0\	&	0\	\\
        0\	&	0\	&	1\	\\
        0\	&	0\	&	0\
	\end{pmatrix},\label{L31sing}
\end{gather}
where $a\ne0 \in F$.

\subsubsection{Case 1: $R=0$.}
As before, since $R=0$ we can use \eqref{xtransform} to put $L$ in Jordan form.  If $L$ is nilpotent (one of the matrices in \eqref{L31nilp}) then $X$ is nilpotent, which contradicts $\nr(\eL)=A(3)$.  Thus we can assume $L_{11} \neq 0$, and we can use \eqref{Gtransform} to make $L_{11}=1$.  This means that $L$ can be written in the form of one of the matrices in \eqref{JCF3}, \eqref{JCF4}, or \eqref{JCF5} with $\lambda_1=1$.  The resulting algebras are classified in Table \ref{L31}.

\subsubsection{Case 2:} $R=\begin{pmatrix} 0&1&\\ &0&\\ &&0 \end{pmatrix}$.
Using relations \eqref{2.7twist} and \eqref{un2.7} we have that matrix $L$ is of the form $L=\begin{pmatrix} 0&a&b \\ 0&0&0 \\ 0&c&d \end{pmatrix}$.  If $d=0$, then $L^3=0$ and $X$ is nilpotent, which is a contradiction.  On the contrary, if $d \neq 0$, then $(L^n)_{33} \neq 0$ for all $n$, so $X$ is not nilpotent.  Thus since $d \neq 0$ we can use \eqref{xtransform} to make $d=1$.  If $c\neq 0$, then we can use \eqref{Stransform} with $S=\begin{pmatrix} 1&&\\ &1&\\ &&1/c \end{pmatrix}$ to make $c=1$.  Note that we can use \eqref{xtransform} to make $\si_1=0$, and unless $a=-1$ and $c=0$ we can also make $\si_2=0$.  The resulting algebras are classified in Table \ref{L31}.

\subsubsection{Case 3:} $R=\begin{pmatrix} 0&1&\\ &0&1\\ &&0 \end{pmatrix}$.
Relations \eqref{2.7twist} and \eqref{un2.7} give that matrix $L$ is of the form $L=\begin{pmatrix} 0&-1&a \\ 0&0&-1 \\ 0&0&0 \end{pmatrix}$.  Thus both $R$ and $L$ are nilpotent, hence so is $X$, which contradicts $\nr(\eL)=A(3)$.  Thus we find no new algebras in this case (just as in section \ref{maxnilp}).

\subsubsection{Case 4:} $R=\begin{pmatrix} 1&&\\ &0&\\ &&0 \end{pmatrix}$.
Using relations \eqref{2.7twist} and \eqref{un2.7} we find that the matrix $L$ is of the form $L = \begin{pmatrix} -1&& \\ &a&b \\ &c&d \end{pmatrix}$.  At this point we can change the basis of $\text{span}\{n_2, n_3\}$ to put $\begin{pmatrix} a&b \\ c&d \end{pmatrix}$ in Jordan form.  This leaves us with two possibilities for $L$:
$$\begin{pmatrix} -1&& \\ &a& \\ &&b \end{pmatrix}
\qquad
\begin{pmatrix} -1&& \\ &a&1 \\ &&a \end{pmatrix},$$
where $a$ and $b$ are not necessarily distinct.  If $a$ or $b$ is nonzero, then by \eqref{sigtransform} an appropriate choice of $\mu$ in \eqref{xtransform} will make the corresponding $\sigma$ be zero.  Additionally, if $a=0$ in the second case, choosing $\mu_2=-\si_3$ makes $\si_3=0$.  If $a=b=0$ in the first case ($L=-R$), then our only constraint on $\si$ is that $\si_1$, $\si_2$, and $\si_3$ are not all zero (or else $\eL$ is Lie).  However in this case, we are free to interchange $n_2$ and $n_3$ (hence $\si_2$ and $\si_3$), so we can assume $\si_2 \geq \si_3$.  The resulting algebras are classified in Table \ref{L31}.

\subsubsection{Case 5:} $R=\begin{pmatrix} 1&&\\ &a&\\ &&0 \end{pmatrix}$.
Here we assume $a \neq 0$.  Using relations \eqref{2.7twist} and \eqref{un2.7} we find that the matrix $L$ is of the form $L = \begin{pmatrix} -1&& \\ &-a& \\ &&b \end{pmatrix}$.
If $b \neq 0$ we can use \eqref{xtransform} to make $\si_3=0$, but since $R_{11}+L_{11}=R_{22}+L_{22}=0$ by \eqref{sigtransform} we can not force $\si_1$ or $\si_2$ to be zero (nor $\si_3$ if $b=0$).  The resulting algebras are classified in Table \ref{L31}.

\subsubsection{Case 6:} $R=\begin{pmatrix} 1&1&\\ &1&\\ &&0 \end{pmatrix}$.
Using relations \eqref{2.7twist} and \eqref{un2.7} we find that the matrix $L$ is of the form $L = \begin{pmatrix} -1&-1& \\ &-1& \\ &&a \end{pmatrix}$.  If $a\neq 0$ we can use \eqref{xtransform} to make $\si_3=0$, but by \eqref{sigtransform} we can not make $\si_1$ or $\si_2$ be zero.  The resulting algebras are classified in Table \ref{L31}.

\subsubsection{Case 7:} $R=\begin{pmatrix} 1&&\\ &0&1\\ &&0 \end{pmatrix}$.
 Using relations \eqref{2.7twist} and \eqref{un2.7} we find that the matrix $L$ is of the form $L = \begin{pmatrix} -1&& \\ &0&a \\ &&0 \end{pmatrix}$.  If $a\neq -1$ we can use \eqref{xtransform} to make $\si_3=0$, but by \eqref{sigtransform} we can not make $\si_1$ or $\si_2$ (or $\si_3$ if $a=-1$) be zero.  The resulting algebras are classified in Table \ref{L31}.

\section{2-Dimensional Extensions of $A(2)$}
The case of $L(r,s)$, which are $(r+s)$-dimensional Leibniz algebras with an $r$-dimensional abelian nilradical, seems significantly more complex when $s>1$.  This is also true in the Lie case, as seen in \cite{nw}.  So, we narrow our focus to the case of 2-dimensional extensions of 2-dimensional abelian nilradicals.

The Leibniz algebra $L(2,2)$ has basis $\{x_1,x_2,n_1,n_2\}$, satisfying relations \eqref{Abel}, \eqref{xnproducts}, and \eqref{xproducts}. As before, we may select $S$ so that $R^1$ is transformed into Jordan canonical form.  Therefore, $R^1$ will be of the form \eqref{JCF1} or \eqref{JCF2}.  Once $R^1$ is determined, the structure of $R^2$ can likewise be determined based on other restrictions, namely \eqref{2.7}, \eqref{2.7twist}, and \eqref{un2.7}.  In particular, \eqref{un2.7} guarantees that if $R^1$ is nonsingular, then $R^1 = -L^1$ and $R^2=-L^2$.  
\subsection{$R^1$ not diagonal}\label{case1}
We first consider $R^1$ of the form \eqref{JCF2}.  In this case, since $\lambda_1\ne0$, $R^1$ is nonsingular.  By \eqref{un2.7}, this implies that $R^1 = -L^1$ and $R^2 = -L^2$. So by \eqref{2.7}, we have $0=[R^1,R^2]$.  This implies that $R^2$ must be of the following form:
$$\begin{pmatrix}
	R_{11}	&	R_{12}	\\
	0		&	R_{11}
\end{pmatrix}.$$
By our requirement for nilindipendence, it must be true that $\lambda_1\ne0$ and $R_{11}\ne0$. However, if $\lambda_1\ne0$ and $R_{11}\ne0$, then $R^1$ and $R^2$ will not be nilindependent.  So, there are no Leibniz algebras $L(2,2)$ defined by $R^1$ of the form \eqref{JCF2}.

\subsection{$R^1$ diagonal}\label{case2}
We now consider $R^1$ of the form \eqref{JCF1}.  Note here that, since at least one of $\lambda_1$ or $\lambda_2$ must be nonzero, we will specify that $\lambda_1\ne0$.  Due to the freedom of $\lambda_2$, we 
further divide such algebras into three cases.  
\begin{enumerate}
	\item	$\lambda_2\ne0$ and $\lambda_2\ne\lambda_1$
    \item	$\lambda_2 = \lambda_1$
    \item	$\lambda_2 = 0$
\end{enumerate}


\subsubsection{Case 1:}\label{2.1.1}
Suppose that $\lambda_2\ne0$ and $\lambda_2\ne\lambda_1$.
Then, $R^1$ is invertible and we have that $R^1=-L^1$ and $R^2=-L^2$, by \eqref{un2.7}.  Then, 
by \eqref{2.7} and since $\lambda_1\ne\lambda_2$, 
$R^2$ is a diagonal matrix. That is, the products of $x_1$ and $x_2$ on $A(2)$ are defined, respectively, by the matrices
$$R^1 = \begin{pmatrix}\lambda_1 & 0 \\ 0 & \lambda_2\end{pmatrix} = -L^1 
\quad\text{and}\quad
R^2 = \begin{pmatrix}\mu_1 & 0 \\ 0 & \mu_2\end{pmatrix} = -L^2,$$
where we are denoting $R_{11}$ by $\mu_1$ and $R_{22}$ by $\mu_2$ for ease of notation.

Since $\lambda_1 \neq \lambda_2$ and $\{x_1, x_2\}$ are nilindependent, using \eqref{Gtransform} we can take a linear combination of $\{x_1, x_2\}$ so that
$$R^1 = \begin{pmatrix}1 & 0 \\ 0 & 0\end{pmatrix} = -L^1 
\quad\text{and}\quad
R^2 = \begin{pmatrix}0 & 0 \\ 0 & 1\end{pmatrix} = -L^2.$$

Applying Lemma \ref{nullspace}, we find that $\si^{\al\al} = \si^{\al\be}+\si^{\be\al} = 0$ for all $\al$, $\be$.
Therefore, the solvable Leibniz algebra $L(2,2)$ with products of $x_1$ defined by the above assumptions on $R^1$, is also isomorphic to a Lie algebra. 

\subsubsection{Case 2:}\label{2.1.2}
Suppose $\lambda_2=\lambda_1\ne 0$.  
Then $R^1$ is invertible, so by \eqref{un2.7} $R^1=-L^1$ and $R^2=-L^2$.
Since $R^1$ is a scalar multiple of the identity matrix, we can use \eqref{Stransform} to put $R^2$ in Jordan form and $R^1$ will be left invariant.  Then $R^2$ will be of form \eqref{JCF1} or \eqref{JCF2}, but if $(R^2)_{11} = (R^2)_{22}$, then $R^1$ and $R^2$ will not be nilindependent, a contradiction.  Hence $R^2$ is a diagonal matrix with $(R^2)_{11} \ne (R^2)_{22}$, so either $R^2$ or $R^2$ plus a multiple of $R^1$ will have $(R^2)_{11}$ and $(R^2)_{22} \neq 0$, so we obtain an algebra from Case 1 by interchanging $R^1$ and $R^2$. Thus we find no new Leibniz algebras in this case.

\subsubsection{Case 3:}\label{2.1.3}
Suppose $\lambda_2=0$.
Then \eqref{2.7twist} implies that $L^1$ and $L^2$ are diagonal.  Applying \eqref{un2.7} with $\alpha=2$, $\beta=1$ we find that $R^2$ is upper-triangular.  Since $R^1$ and $R^2$ are nilindependent this implies that $(R^2)_{22} \ne 0$.  Applying \eqref{un2.7} with other choices of $\alpha$ and $\beta$ we find that $R^1=-L^1$ and $R^2=-L^2$ (and $R^2$ is diagonal).  Again, either $R^2$ or $R^2$ plus a multiple of $R^1$ will have $(R^2)_{11}$ and $(R^2)_{22} \neq 0$, so we obtain an algebra from Case 1 and find no new Leibniz algebras in this case.

\subsection{General result for solvable Leibniz algebras $L(2,2)$}
By the allowable transformations \eqref{Stransform} and \eqref{Gtransform}, with the arguments given in \ref{case1} and \ref{case2}, we have a general result on all solvable Leibniz algebras of type $L(2,2)$.
\begin{thm}
All solvable Leibniz algebras $L(2,2)$, 2-dimensional extensions of a 2-dimensional abelian nilradical, are also Lie algebras.
\end{thm}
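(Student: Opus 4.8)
The plan is to assemble the case analysis already carried out in Sections~\ref{case1} and~\ref{case2} into a single clean argument. Using \eqref{Stransform} we put $R^1$ into Jordan canonical form, so $R^1$ is either diagonal, of the form \eqref{JCF1}, or a single $2\times2$ Jordan block, of the form \eqref{JCF2}. The block case is vacuous, by Section~\ref{case1}: since $\lambda_1\neq0$ the matrix $R^1$ is invertible, so \eqref{un2.7} gives $R^1=-L^1$ and $R^2=-L^2$, then \eqref{2.7} forces $R^2=\begin{pmatrix}a&b\\0&a\end{pmatrix}$, and no nonzero such pair $\{R^1,R^2\}$ is nilindependent. Hence we may assume $R^1$ is diagonal with $\lambda_1\neq0$, and it remains to handle the three subcases $\lambda_2\notin\{0,\lambda_1\}$, $\lambda_2=\lambda_1\neq0$, and $\lambda_2=0$.

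The heart of the argument is to reduce all three subcases to one normal form: after \eqref{Stransform} and \eqref{Gtransform}, and possibly interchanging $x_1$ and $x_2$, one can arrange
\[
R^1=\begin{pmatrix}1&0\\0&0\end{pmatrix}=-L^1,\qquad
R^2=\begin{pmatrix}0&0\\0&1\end{pmatrix}=-L^2 .
\]
When $\lambda_2\notin\{0,\lambda_1\}$ this is Section~\ref{2.1.1}: \eqref{un2.7} gives $R^i=-L^i$, \eqref{2.7} together with distinctness of the eigenvalues forces $R^2$ diagonal, and nilindependence lets \eqref{Gtransform} replace $\{x_1,x_2\}$ by the linear combinations whose right-actions are the displayed projections. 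When $\lambda_2=\lambda_1\neq0$ (Section~\ref{2.1.2}) or $\lambda_2=0$ (Section~\ref{2.1.3}), the relations \eqref{2.7}, \eqref{2.7twist}, \eqref{un2.7} again pin $R^2$, $L^1$, $L^2$ down to diagonal matrices with $R^i=-L^i$, and nilindependence forces the relevant diagonal entry of $R^2$ to be nonzero; a suitable combination of $x_1$ and $x_2$ then yields a pair of diagonal matrices with nonzero, distinct spectra, i.e.\ an instance of the first subcase with the roles of $x_1$ and $x_2$ possibly swapped. I expect the main obstacle to be precisely this bookkeeping in the $\lambda_2=0$ case: one must check that the one-sided Jacobi relations still force $R^2$ (hence $L^1$, $L^2$) into diagonal form, and that the combinations invoked through \eqref{Gtransform} do not destroy nilindependence.

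Finally, one reads off the conclusion from the normal form. The matrices above satisfy $\ns((R^1)^T)\cap\ns((R^2)^T)=0$, so Lemma~\ref{nullspace}, applied with $\gamma=1$ and with $\gamma=2$, forces $\si^{\al\al}=0$ and $\si^{\al\be}+\si^{\be\al}=0$ for all $\al,\be$; thus each product $[x_\al,x_\be]=\si^{\al\be}_j n_j$ is antisymmetric. Combined with $[x_\al,N]=L^\al N=-R^\al N=-[N,x_\al]$ and $[n_i,n_j]=0$, this shows the bracket is antisymmetric on every pair of basis vectors, hence on all of $L(2,2)$ by bilinearity, so $L(2,2)$ is a Lie algebra. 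The only presentational care needed is to treat the three diagonal subcases uniformly; folding $\lambda_2=\lambda_1$ and $\lambda_2=0$ into $\lambda_2\notin\{0,\lambda_1\}$ by a change of basis in $\spn\{x_1,x_2\}$, as above, keeps the proof short.
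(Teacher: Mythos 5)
Your proposal is correct and follows essentially the same route as the paper: the paper's proof of this theorem is exactly the case analysis of Sections \ref{case1} and \ref{case2}, reducing every $L(2,2)$ to the normal form $R^1=\mathrm{diag}(1,0)=-L^1$, $R^2=\mathrm{diag}(0,1)=-L^2$ and then invoking Lemma \ref{nullspace} to kill $\si^{\al\al}$ and $\si^{\al\be}+\si^{\be\al}$. The only thing you add is the explicit final step that antisymmetry on all basis pairs makes the algebra Lie, which the paper leaves implicit.
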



\noindent{\bf Acknowledgements.}  This work was completed as part of a undergraduate independent research project at Spring Hill College.  The authors would like to extend gratitutde to the support given by the Department of Mathematics at Spring Hill College.

\newpage

\newpage

\begin{center}
\begin{small}
\begin{longtable}{lllcc}
No.	&	$R$	&	$L$		&	$\si$	&	restrictions	\endhead 
\hline\hline 
(1)
&	$
0$	&	$1$	&	
	$\si\in F$	&	\\
\hline
\hline
(2.1)	
&	$ 0 $

&	$\left(\begin{array}{cc}
	1&0\\
	0&0\\
	\end{array}\right)$
&	$\si_1=0,\si_2 \in F$	
&	\\
\hline
(2.2)	
&	$ 
0$	
&	$\left(\begin{array}{cc}
	1&0\\
	0&a\\
	\end{array}\right)$
&	$\si_1,\si_2=0$	
&	$a\ne0\in F$\\
\hline
(2.3)	
&	$ 
0$	
&	$\left(\begin{array}{cc}
	1&1\\
	0&1\\
	\end{array}\right)$
&	$\si_1,\si_2=0$	
&	\\
\hline
(2.4)	
&	$ 
\left(\begin{array}{cc}
	1&0\\
	0&0\\
\end{array}\right)$	
&	$\left(\begin{array}{cc}
	-1&0\\
	0&a\\
	\end{array}\right)$
&	$\begin{array}{c}
	\si_1\in F,	\\
    \si_2=0
    \end{array}$	
&	$a\ne0\in F$\\
\hline
(2.5)	
&	$ 
\left(\begin{array}{cc}
	1&0\\
	0&0\\
\end{array}\right)$	
&	$L=-R$
&	$\si_1, \si_2 \in F$, not both zero	
&		\\
\hline
\hline
(3.1) 
&	$0$	
&	$\left(\begin{array}{ccc}
	1&&\\
	&0&\\
	&&0\\
\end{array}\right)$
&	$\begin{array}{c}
	\si_{1}=0	\\
    \si_2,\si_3\in F
	\end{array}$
&		\\
\hline
(3.2) 
&	$0$	
&	$\left(\begin{array}{ccc}
	1&&\\
	&a&\\
	&&0\\
\end{array}\right)$
&	$\begin{array}{c}
	\si_{1},\si_2=0	\\
    \si_3\in F
	\end{array}$
&	$a\neq 0$	\\
\hline
(3.3) 
&	$0$	
&	$\left(\begin{array}{ccc}
	1&&\\
	&a&\\
	&&b\\
\end{array}\right)$
&	$\si_{1}, \si_2, \si_3=0$	
&	$a,b\neq 0 \in F$	\\
\hline
(3.4) 
&	$0$	
&	$\left(\begin{array}{ccc}
	1&1&\\
	&1&\\
	&&0\\
\end{array}\right)$
&	$\begin{array}{c}
	\si_{1}, \si_2=0	\\
    \si_3\in F
	\end{array}$
&		\\
\hline
(3.5) 
&	$0$	
&	$\left(\begin{array}{ccc}
	1&&\\
	&0&1\\
	&&0\\
\end{array}\right)$
&	$\begin{array}{c}
	\si_{1}, \si_3=0	\\
    \si_2 \in F
	\end{array}$
&	\\
\hline
(3.6) 
&	$0$	
&	$\left(\begin{array}{ccc}
	1&1&\\
	&1&\\
	&&a\\
\end{array}\right)$
&	$\si_{1}, \si_2, \si_3=0	$
&	$a\neq 0\in F$	\\
\hline
(3.7) 
&	$0$	
&	$\left(\begin{array}{ccc}
	1&&\\
	&a&1\\
	&&a\\
\end{array}\right)$
&	$\si_{1},\si_2, \si_3=0	$
&	$a \neq 0 $ or $1 \in F$	\\
\hline
(3.8) 
&	$0$	
&	$\left(\begin{array}{ccc}
	1&&\\
	&1&\\
	&&1\\
\end{array}\right)$
&	$\si_{1},\si_2,\si_3=0$
&		\\
\hline
(3.9)
& $\left(\begin{array}{ccc} 0&1&\\ &0&\\ &&0\\ \end{array}\right)$
&$\left(\begin{array}{ccc} 0&-1&a\\ 0&0&0\\ 0&0&1 \\\end{array}\right)$
& $\begin{array}{c}
	\si_3=0	\\
    \si_{1}, \si_2\in F
	\end{array}$
&	$a\in F$	\\
\hline
(3.10)
& $\left(\begin{array}{ccc} 0&1&\\ &0&\\ &&0\\ \end{array}\right)$
&$\left(\begin{array}{ccc} 0&a&b\\ 0&0&0\\ 0&0&1 \\\end{array}\right)$
& $\begin{array}{c}
	\si_2,\si_3=0	\\
    \si_{1} \in F
	\end{array}$
& 	$\begin{array}{c} a \neq -1 \in F\\ b\in F \end{array}$	\\
\hline
(3.11)
& $\left(\begin{array}{ccc} 0&1&\\ &0&\\ &&0\\ \end{array}\right)$
&$\left(\begin{array}{ccc} 0&a&b\\ 0&0&0\\ 0&1&1 \\\end{array}\right)$
& $\begin{array}{c}
	\si_2,\si_3=0	\\
    \si_{1}\in F
	\end{array}$
&	$a,b\in F$	\\
\hline
(3.12) 
&	$\left(\begin{array}{ccc}
	1&&\\
	&0&\\
	&&0\\
\end{array}\right)$	
&	$\left(\begin{array}{ccc}
	-1&&\\
	&a&\\
	&&b\\
\end{array}\right)$
&	$\begin{array}{c}
	\si_2,\si_3=0	\\
    \si_{1}\in F
	\end{array}$
&	$a,b\neq 0 \in F$	\\
\hline
(3.13) 
&	$\left(\begin{array}{ccc}
	1&&\\
	&0&\\
	&&0\\
\end{array}\right)$	
&	$\left(\begin{array}{ccc}
	-1&&\\
	&a&\\
	&&0\\
\end{array}\right)$
&	$\begin{array}{c}
	\si_2=0	\\
    \si_{1}, \si_3\in F
	\end{array}$
&	$a\neq 0 \in F$	\\
\hline
(3.14) 
&	$\left(\begin{array}{ccc}
	1&&\\
	&0&\\
	&&0\\
\end{array}\right)$	
&	$\left(\begin{array}{ccc}
	-1&&\\
	&a&1\\
	&&a\\
\end{array}\right)$
&	$\begin{array}{c}
	\si_2,\si_3=0	\\
    \si_{1}\in F
	\end{array}$
&	$a\neq 0 \in F$	\\
\hline
(3.15) 
&	$\left(\begin{array}{ccc}
	1&&\\
	&0&\\
	&&0\\
\end{array}\right)$	
&	$\left(\begin{array}{ccc}
	-1&&\\
	&0&1\\
	&&0\\
\end{array}\right)$
&	$\begin{array}{c}
	\si_3=0	\\
    \si_{1}, \si_2 \in F
	\end{array}$
&		\\
\hline
(3.16) 
&	$\left(\begin{array}{ccc}
	1&&\\
	&0&\\
	&&0\\
\end{array}\right)$	
&	$L=-R$
&	$\begin{array}{c}
	\si_{1}, \si_2, \si_3 \neq 0,	\\
	\si_2 \geq\si_3
	\end{array}$
&		\\
\hline
(3.17) 
&	$\left(\begin{array}{ccc}
	1&&\\
	&a&\\
	&&0\\
	\end{array}\right)$	
&	$\left(\begin{array}{ccc}
	-1&&\\
	&-a&\\
	&&b\\
	\end{array}\right)$
&	$\begin{array}{c} 
	\si_3=0\\
	\si_{1}, \si_2 \in F 
    \end{array}$
&	$\begin{array}{c}
	a \ne 0 \in F\\
	b \ne 0 \in F
	\end{array}$\\
\hline
(3.18) 
&	$\left(\begin{array}{ccc}
	1&&\\
	&a&\\
	&&0\\
	\end{array}\right)$	
&	$L=-R$
&	$\si_1,\si_2, \si_3\ne0\in F$
&	$a \ne 0 \in F$	\\
\hline
(3.19)
&	$\left(\begin{array}{ccc}
	1&1&\\
	&1&\\
	&&0\\
	\end{array}\right)$	
&	$\left(\begin{array}{ccc}
	-1&-1&\\
	&-1&\\
	&&a\\
	\end{array}\right)$
&	$\begin{array}{c}
 \si_3=0 \\
\si_{1},\si_2 \in F
\end{array}$
&	$a \ne 0 \in F$	\\
\hline
(3.20)
&	$\left(\begin{array}{ccc}
	1&1&\\
	&1&\\
	&&0\\
	\end{array}\right)$	
&	L = -R
&	$\si_1,\si_2, \si_3\ne0\in F	$
&		\\
\hline
(3.21)
&	$\left(\begin{array}{ccc}
	1&&\\
	&0&1\\
	&&0\\
	\end{array}\right)$	
&	$\left(\begin{array}{ccc}
	-1&&\\
	&0&a\\
	&&0\\
	\end{array}\right)$
&	$\begin{array}{c}
	\si_3=0	\\
    \si_{1},\si_2 \in F
	\end{array}$
&	$a\ne -1 \in F	$	\\
\hline
(3.22)
&	$\left(\begin{array}{ccc}
	1&&\\
	&0&1\\
	&&0\\
	\end{array}\right)$	
&	$L=-R$
&	$ \si_1, \si_2, \si_3 \neq 0 \in F $
&		\\
\hline\hline
\end{longtable}
\label{L31}\label{L11}\label{L21}
\end{small}
\end{center}

\end{document}